\numberwithin{equation}{section}
\newtheorem{lemma}{Lemma} 
\newtheorem{corollary}{Corollary}
\begin{document}

\author{Ajai Choudhry}
\title{An arbitrary number of squares whose sum,\\ 
on  excluding any one of them, \\is also a square}

\date{}
\maketitle
\abstract{This paper is concerned with the problem of finding $n$ distinct  squares such that, on excluding any one of them, the sum of the remaining $n-1$ squares is  a square. While parametric solutions are known when $n=3$ and $n=4$, when $n > 4$, only a finite number of numerical solutions, found by computer trials, are known. In fact, efforts to find parametric solutions for $n > 4$ have so far been futile. In this paper we describe two methods of obtaining parametric solutions of the problem, and we apply these methods to get several parametric solutions when $n=5, 6, 7$ or $8$. We also indicate how parametric solutions may be obtained for larger values of $n$}. 
\medskip

\noindent Keywords: squares; sums of squares.
\medskip

\noindent Mathematics Subject Classification 2020: 11D09
\bigskip

\section{Introduction}\label{intro}

This paper is concerned with the problem of finding $n$ distinct perfect squares such that, on excluding any one of them, the sum of the remaining $n-1$ squares is also a perfect square where $n$ is a given positive integer.

The case $n=3$ dates back to over three centuries with the first numerical solution having been recorded in 1719. Subsequently, several authors gave different methods of solving the problem and a parametric solution was obtained  by Euler in 1772 (as quoted by Dickson \cite[pp. 497--502]{Di}). A number of numerical solutions were listed by Lal and Blundon \cite{LB}, by Leech \cite{Le} and by Spohn \cite{Sp},  and a detailed investigation was done by Bremner \cite{Br} who obtained several parametric solutions.

For $n=4$ also,  parametric solutions were  obtained by Euler as well as several other authors (see  \cite[pp. 502--505]{Di}).    Lagrange\footnote{a twentieth century namesake of the great eighteenth century mathematician}  studied this case in some detail and described a method of constructing many  new solutions \cite{La}.  

Gill \cite[pp.\ 69--76]{Gi} considered the problem of finding the desired $n$ squares for any positive integer $n$ using trigonometric  functions but  his method is too impractical for obtaining actual  solutions  when $n \geq 5$. In fact, Gill himself noted that even for $n=5$, the smallest solution generated by his method would  yield  squares whose roots probably have at least 30 digits and, in fact, they may have more than 60 digits.

A simple  computational method of finding numerical solutions of the problem when $n \geq 5$ has been given by Lagrange  who computed the smallest solutions for $3 \leq n  \leq 8$, and compiled tables of numerical solutions when $n$ is $ 3, 4 $ or $5$. Lagrange stated that the numerical solutions suggest the existence of parametric solutions when $n \geq 5$ but he could not find them.

In this paper we describe two  methods of finding parametric solutions of the problem when $n \geq 5$, and we actually obtain several parametric solutions when $n$ is 5, 6, 7 or 8. 

\section{Preliminary lemmas}\label{prelim}
In this section we give four lemmas that will be used to obtain parametric solutions of our problem. The first lemma given below has been implicitly assumed in \cite{La} --- we give here an explicit proof.
\begin{lemma}\label{necsuffcond} There exist $n$ perfect squares $x_i^2, i=1, \ldots, n$, such that the sum of any $n-1$ of them is a perfect square if and only if the  simultaneous diophantine equations,
\begin{align}
s & = x_1^2+y_1^2=x_2^2+y_2^2=\ldots = x_n^2+y_n^2, \label{diophchn}\\
s & =x_1^2+y_1^2=x_1^2+x_2^2+\cdots + x_n^2, \label{csum}
\end{align}
have a solution in integers.
\end{lemma}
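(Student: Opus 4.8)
The plan is to prove the two implications separately, with the whole argument resting on the single bookkeeping identity that the sum of the $n-1$ squares obtained by deleting $x_i^2$ equals $s - x_i^2$, where $s = \sum_{j=1}^{n} x_j^2$ is the grand total of all $n$ squares.

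For the forward direction, I would start from $n$ integers $x_1, \ldots, x_n$ having the stated property and set $s := x_1^2 + \cdots + x_n^2$. The hypothesis says that for each $i$ the deleted sum $\sum_{j \neq i} x_j^2 = s - x_i^2$ is a perfect square; I would name its square root $y_i$, so that $s = x_i^2 + y_i^2$ holds for every $i = 1, \ldots, n$. This chain of equalities is precisely \eqref{diophchn}. Equation \eqref{csum} then comes essentially for free: the case $i=1$ gives $s = x_1^2 + y_1^2$, while the definition of $s$ gives $s = x_1^2 + \cdots + x_n^2$, and these two readings are exactly the two halves of \eqref{csum}.

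For the converse, I would take a solution $(s, x_1, \ldots, x_n, y_1, \ldots, y_n)$ of \eqref{diophchn} and \eqref{csum} and run the same computation backwards. Reading off the right-hand equality of \eqref{csum} first pins down $s = \sum_{j=1}^{n} x_j^2$. Substituting this into the relation $s = x_i^2 + y_i^2$ supplied by \eqref{diophchn} yields $\sum_{j \neq i} x_j^2 = s - x_i^2 = y_i^2$ for each $i$, so that every deleted sum is the perfect square $y_i^2$. Hence the squares $x_1^2, \ldots, x_n^2$ have the required property, completing the equivalence.

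Since each step is a direct substitution, I do not anticipate any genuine obstacle; the only point demanding care is the role of the \emph{second} equality in \eqref{csum}, which is what forces $s$ to equal the grand total $\sum_{j} x_j^2$ rather than merely some common value of the forms $x_i^2 + y_i^2$. Without that constraint, \eqref{diophchn} alone would only assert that the numbers $s - x_i^2$ are squares for an arbitrary common $s$, with no guarantee that they are the deleted sums of the $x_i^2$; it is the extra equation \eqref{csum} that ties the auxiliary parameter $s$ to the actual sum of the squares and thereby makes the two formulations genuinely equivalent.
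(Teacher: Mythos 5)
Your proposal is correct and follows essentially the same argument as the paper: both directions are handled by the identity that the deleted sum equals $s-x_i^2$ with $s=\sum_j x_j^2$, exactly as in the paper's proof. Your closing remark on the role of the second equality in \eqref{csum} is a nice clarification but does not change the substance.
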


\begin{proof} If there exist  $n$ perfect squares $x_i^2, i=1, \ldots, n$, such that the sum of any $n-1$ of them is a perfect square, then, for each $i, i=1, \ldots, n$, on excluding $x_i^2$, we may write the sum of the remaining $n-1$ squares as $y_i^2$, that is, $\sum_{i=1}^n x_i^2 -x_i^2=y_i^2$. Hence, for each $i$, we have $x_i^2+y_i^2= \sum_{i=1}^n x_i^2$. Denoting the common sum by $s$, we get a solution of the simultaneous diophantine equations \eqref{diophchn} and \eqref{csum}. 

Conversely, if there exists a solution in integers of the simultaneous diophantine equations \eqref{diophchn} and \eqref{csum}, then for each $i$,  $i=1, \ldots, n$,  the sum  $\sum_{i=1}^n x_i^2 -x_i^2 =x_1^2+y_1^2 -x_i^2=y_i^2$. Thus the sum of any $n-1$ of the perfect squares $x_i^2, i=1, \ldots, n$, is also a perfect square. This proves the lemma. \end{proof}

\begin{lemma}\label{soldiophchn} If  $\phi_1(f_1, f_2, g_1, g_2, h_1, h_2)$  and $\phi_2(f_1, f_2, g_1, g_2, h_1, h_2)$ are two functions defined by
\begin{equation}
\begin{aligned}
\phi_1(f_1, f_2, g_1, g_2, h_1, h_2)& = (f_1g_1 + f_2g_2)h_1 + (f_1g_2 - f_2g_1)h_2, \\
\phi_2(f_1, f_2, g_1, g_2, h_1, h_2)& = (-f_1g_2 + f_2g_1)h_1 + (f_1g_1 + f_2g_2)h_2,
\end{aligned}
\label{defphi}
\end{equation}
a solution of the diophantine chain  
\begin{equation}
a_1^2+b_1^2=a_2^2+b_2^2=a_3^2+b_3^2=a_4^2+b_4^2, \label{diophchn4} 
\end{equation}
is given by 
\begin{equation}
\begin{aligned}
(a_1, b_1) & =(\phi_1(p_1, p_2, q_1, q_2, r_1, r_2), \phi_2(p_1, p_2, q_1, q_2, r_1, r_2)), \\
(a_2, b_2) & = (\phi_1(p_1, -p_2, q_1, q_2, r_1, r_2), \phi_2(p_1, -p_2, q_1, q_2, r_1, r_2)), \\
(a_3, b_3) & = (\phi_1(p_1, p_2, q_1, -q_2, r_1, r_2), \phi_2(p_1, p_2, q_1, -q_2, r_1, r_2)), \\
(a_4, b_4) & = (\phi_1(p_1, p_2, q_1, q_2, r_1, -r_2), \phi_2(p_1, p_2, q_1, q_2, r_1, -r_2)), 
\end{aligned}
\label{valabchn4}
\end{equation}
where $p_i, q_i, r_i, i=1, 2$, are arbitrary parameters.

Further, if $\psi_1(e_1, e_2, f_1, f_2, g_1, g_2, h_1, h_2)$  and $\psi_2(e_1, e_2, f_1, f_2, g_1, g_2, h_1, h_2)$ are two functions defined by 
\begin{equation}
\begin{aligned}
\psi_1(e_1, e_2, f_1, f_2, g_1, g_2, h_1, h_2)& =(-e_1f_1g_2 + e_1f_2g_1 - e_2f_1g_1 - e_2f_2g_2)h_1\\
& \quad \quad  + (e_1f_1g_1 + e_1f_2g_2 - e_2f_1g_2 + e_2f_2g_1)h_2, \\
\psi_2(e_1, e_2, f_1, f_2, g_1, g_2, h_1, h_2)& =(e_1f_1g_1 + e_1f_2g_2 - e_2f_1g_2 + e_2f_2g_1)h_1\\
 & \quad \quad + (e_1f_1g_2 - e_1f_2g_1 + e_2f_1g_1 + e_2f_2g_2)h_2,
\end{aligned}
\label{defpsi}
\end{equation}
a solution of the diophantine chain,  
\begin{equation}
a_1^2+b_1^2=a_2^2+b_2^2=\ldots=a_8^2+b_8^2, \label{diophchn8}
\end{equation}
is given by
\begin{equation}
\begin{aligned}
(a_1, b_1) &  =(\psi_1(p_1, p_2, q_1, q_2, r_1, r_2, s_1, s_2), \psi_2(p_1, p_2, q_1, q_2, r_1, r_2, s_1, s_2)), \\
(a_2, b_2) &  =(\psi_1(p_1, -p_2, q_1, q_2, r_1, r_2, s_1, s_2), \psi_2(p_1, -p_2, q_1, q_2, r_1, r_2, s_1, s_2)), \\
(a_3, b_3) &  =(\psi_1(p_1, p_2, q_1, -q_2, r_1, r_2, s_1, s_2), \psi_2(p_1, p_2, q_1, -q_2, r_1, r_2, s_1, s_2)), \\
(a_4, b_4) &  =(\psi_1(p_1, p_2, q_1, q_2, r_1, -r_2, s_1, s_2), \psi_2(p_1, p_2, q_1, q_2, r_1, -r_2, s_1, s_2)), \\
(a_5, b_5) &  =(\psi_1(p_1, p_2, q_1, q_2, r_1, r_2, s_1, -s_2), \psi_2(p_1, p_2, q_1, q_2, r_1, r_2, s_1, -s_2)), \\
(a_6, b_6) &  =(\psi_1(p_1, -p_2, q_1, -q_2, r_1, r_2, s_1, s_2), \psi_2(p_1, -p_2, q_1, -q_2, r_1, r_2, s_1, s_2)), \\
(a_7, b_7) &  =(\psi_1(p_1, -p_2, q_1, q_2, r_1, -r_2, s_1, s_2), \psi_2(p_1, -p_2, q_1, q_2, r_1, -r_2, s_1, s_2)), \\
(a_8, b_8) &  =(\psi_1(p_1, -p_2, q_1, q_2, r_1, r_2, s_1, -s_2), \psi_2(p_1, -p_2, q_1, q_2, r_1, r_2, s_1, -s_2)), 
\end{aligned}
\label{valabchn8}
\end{equation}
where $p_i, q_i, r_i, s_i, i=1, 2$, are arbitrary parameters.
\end{lemma}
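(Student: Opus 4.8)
The plan is to recognize that the functions $\phi_1,\phi_2$ and $\psi_1,\psi_2$ merely encode multiplication of Gaussian integers, so that the entire lemma reduces to the multiplicativity of the norm form $x^2+y^2$. To this end I would introduce the complex numbers $p=p_1+ip_2$, $q=q_1+iq_2$, $r=r_1+ir_2$ (and, for the second part, $s=s_1+is_2$), writing $\bar p,\bar q,\ldots$ for their conjugates and noting that $|p|^2=p_1^2+p_2^2$, and so on. The whole argument then rests on two factorization identities together with the elementary fact that $|\bar z|=|z|$.

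For the first part the identity to verify is
\[
\phi_1(f_1,f_2,g_1,g_2,h_1,h_2)+i\,\phi_2(f_1,f_2,g_1,g_2,h_1,h_2)=f\bar g\,h,
\]
where $f=f_1+if_2$, $g=g_1+ig_2$, $h=h_1+ih_2$. This is checked by a direct expansion: setting $u=\bar f g$, one finds $f\bar g=\bar u$, and the real and imaginary parts of $\bar u\,h$ are exactly $\phi_1$ and $\phi_2$. Taking moduli gives $\phi_1^2+\phi_2^2=|f|^2|g|^2|h|^2=(f_1^2+f_2^2)(g_1^2+g_2^2)(h_1^2+h_2^2)$. Now each of the four rows of \eqref{valabchn4} arises from $(f,g,h)=(p,q,r)$ by conjugating at most one of the factors $p,q,r$ (via a sign change in $p_2$, $q_2$ or $r_2$); since conjugation preserves the modulus, every one of $a_1^2+b_1^2,\ldots,a_4^2+b_4^2$ equals the common value $(p_1^2+p_2^2)(q_1^2+q_2^2)(r_1^2+r_2^2)$, which is precisely the chain \eqref{diophchn4}.

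The second part proceeds identically once the analogous factorization
\[
\psi_1+i\,\psi_2=i\,e\bar f\,g\,\bar h
\]
(with $e=e_1+ie_2$, etc.) is established, again by direct expansion; the harmless scalar factor $i$ does not affect the modulus, so $\psi_1^2+\psi_2^2=|e|^2|f|^2|g|^2|h|^2$. The eight rows of \eqref{valabchn8} are each obtained from $(e,f,g,h)=(p,q,r,s)$ by conjugating some subset of the four factors, so all eight sums $a_i^2+b_i^2$ collapse to the single product $(p_1^2+p_2^2)(q_1^2+q_2^2)(r_1^2+r_2^2)(s_1^2+s_2^2)$, establishing \eqref{diophchn8}.

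The only genuine work, and hence the main obstacle, is the bookkeeping in confirming the two bracketed identities $\phi_1+i\phi_2=f\bar g\,h$ and $\psi_1+i\psi_2=i\,e\bar f\,g\,\bar h$: one must pin down the precise arrangement of conjugates and, in the second case, account for the stray factor $i$ coming from the odd number of conjugations. Once both identities are verified, the multiplicativity of $|\cdot|^2$ together with the conjugation–invariance of the modulus finishes the proof immediately.
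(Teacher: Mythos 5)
Your proof is correct, and it is essentially the paper's own argument in different clothing: the paper repeatedly applies the two-square identity $(u_1^2+u_2^2)(v_1^2+v_2^2)=(u_1v_1\mp u_2v_2)^2+(u_1v_2\pm u_2v_1)^2$ to write the products $(p_1^2+p_2^2)(q_1^2+q_2^2)(r_1^2+r_2^2)$ and $(p_1^2+p_2^2)(q_1^2+q_2^2)(r_1^2+r_2^2)(s_1^2+s_2^2)$ as sums of two squares in $4$ and $8$ ways, which is precisely the multiplicativity of the Gaussian norm together with the sign-choice-equals-conjugation observation you make explicit via $\phi_1+i\phi_2=f\bar g h$ and $\psi_1+i\psi_2=i\,e\bar f g\bar h$ (both of which I have checked). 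Your packaging arguably makes the verification of the displayed formulas more transparent, but the underlying idea is identical.
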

\begin{proof} By repeated use of the identity
\begin{equation}
\begin{aligned}
(u_1^2+u_2^2)(v_1^2+v_2^2) & =(u_1v_1-u_2v_2)^2+(u_1v_2+u_2v_1)^2\\
                           &  =(u_1v_1+u_2v_2)^2+(u_1v_2-u_2v_1)^2,
\end{aligned}
\label{compident}
\end{equation}
we may express 
\begin{equation}
(p_1^2+p_2^2)(q_1^2+q_2^2)(r_1^2+r_2^2)
\label{csumchn4}
\end{equation}
 as a sum of two squares in 4 different ways namely, $a_i^2+b_i^2, i=1, \ldots, 4$, where $a_i, b_i$ are defined by \eqref{valabchn4}. Thus for each $i$, the sum  $a_i^2+b_i^2$ is given by \eqref{csumchn4} which proves the first part of the lemma. Similarly, we may express 
\begin{equation}
(p_1^2+p_2^2)(q_1^2+q_2^2)(r_1^2+r_2^2)(s_1^2+s_2^2)
\label{csumchn8}
\end{equation}
as a sum of two squares in 8 different ways namely, $a_i^2+b_i^2, i=1, \ldots, 8$, with $a_i, b_i$  defined by \eqref{valabchn8}, and as before, this establishes  the second part of the lemma which completes the proof. 
\end{proof}

\begin{lemma}\label{solrepeatedsq} If $n \geq 3$, and we define integers $x_i,  i=1, \ldots, n$, in terms of an arbitrary integer parameter $t$, by 
\begin{equation}
\begin{aligned}
x_1=x_2=\dots = x_{n-2}&=8t(t^2 + 1)(t^2 - 1),\\
x_{n-1} &= (t^2 - 1)\{(n - 2)t^4 + (2n - 20)t^2 + n - 2)\},\\
 x_n& = 2t\{(n - 6)t^4 + (2n + 4)t^2 + n - 6\},
\end{aligned}
\label{gennrepeatedsquares}
\end{equation}
the $n$ squares $x_i^2, i=1, \ldots, n$, are such that when any one of them is excluded, the sum of the remaining $n-1$ squares is also a square.

Further, if $n=m^2+1$ where $m$ is an arbitrary positive integer $\geq 2$,  and we define integers $x_i,  i=1, \ldots, n$, in terms of an arbitrary integer parameter $t$, by 
\begin{equation}
x_1=x_2=\dots = x_{n-1}=2t, \quad  x_n  = (n-2)t^2-1,
\label{splnrepeatedsquares}
\end{equation}
the $n$ squares $x_i^2, i=1, \ldots, n$, have the same property.
\end{lemma}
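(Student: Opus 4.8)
The plan is to verify the defining property directly: writing $S=\sum_{j=1}^n x_j^2$, it suffices to show that $S-x_i^2$ is a perfect square for every $i$. Since $x_1=\cdots=x_{n-2}$ in the first assertion, excluding any of these $n-2$ equal squares yields the same sum, so only three distinct quantities need to be examined, namely $S-x_1^2$, $S-x_{n-1}^2$ and $S-x_n^2$. I would set $a=8t(t^2+1)(t^2-1)$, so that $a^2=64t^2(t^4-1)^2$ and $S=(n-2)a^2+x_{n-1}^2+x_n^2$, and then treat the three cases in turn.

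For the case of excluding $x_{n-1}$, I would write $S-x_{n-1}^2=(n-2)a^2+x_n^2$ and extract the common factor $4t^2$, reducing the claim to showing that $16(n-2)(t^4-1)^2+\{(n-6)t^4+(2n+4)t^2+(n-6)\}^2$ is a perfect square; expanding and completing the square, I expect this to equal $\{(n+2)t^4+2(n-6)t^2+(n+2)\}^2$, so that $S-x_{n-1}^2=[\,2t\{(n+2)t^4+2(n-6)t^2+(n+2)\}\,]^2$. Symmetrically, for the case of excluding $x_n$, I would factor $(t^2-1)^2$ out of $S-x_n^2=(n-2)a^2+x_{n-1}^2$ and complete the square to obtain $S-x_n^2=[\,(t^2-1)\{(n-2)t^4+2(n+6)t^2+(n-2)\}\,]^2$. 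Both reductions hinge on the quartics involved being palindromic (equal first and last coefficients), which is exactly what makes the completed square come out with no residual terms.

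The main obstacle is the remaining case, excluding one of the $n-2$ equal squares, where $S-x_1^2=(n-3)a^2+x_{n-1}^2+x_n^2$ is a degree-$12$ polynomial in $t$ with no obvious factorisation. Here I would expand everything as a polynomial in $t$, collecting coefficients as polynomials in $n$. The key point I expect to emerge is that every coefficient carries the common factor $(n-2)^2$ and that the resulting bracket is $t^{12}+6t^{10}+15t^8+20t^6+15t^4+6t^2+1$; recognising the binomial coefficients $\binom{6}{k}$ then gives $S-x_1^2=(n-2)^2(t^2+1)^6=[\,(n-2)(t^2+1)^3\,]^2$. Verifying that the numerous cross-terms collapse into this clean form is the one step that is not routine, and spotting the $(t^2+1)^6$ pattern is what makes the computation tractable.

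For the second assertion I would compute the total sum directly as $S=(n-2)^2t^4+2nt^2+1$. Excluding one of the equal squares $2t$ gives $S-4t^2=(n-2)^2t^4+2(n-2)t^2+1=[(n-2)t^2+1]^2$, a perfect square for every $n$. Excluding $x_n$ instead gives $S-x_n^2=(n-1)(2t)^2$, which is a perfect square precisely when $n-1$ is a perfect square; this is exactly where the hypothesis $n=m^2+1$ enters, yielding $S-x_n^2=(2mt)^2$ and completing the proof.
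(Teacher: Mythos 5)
Your proposal is correct and follows essentially the same route as the paper: a direct verification that each of the three distinct sums $S-x_1^2$, $S-x_{n-1}^2$, $S-x_n^2$ is the square of the polynomial you identify (these match the paper's stated identities, e.g. $(n-3)x_1^2+x_{n-1}^2+x_n^2=(n-2)^2(t^2+1)^6$), and likewise for the second family. The paper simply asserts the identities without the completing-the-square details, so your proposal is just a more explicit version of the same computation.
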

\begin{proof} If we consider the $n$ squares defined by \eqref{gennrepeatedsquares}, on excluding any one of the first $n-2$ squares, the sum of the remaining $n-1$ squares is 
\[(n-3)x_1^2+x_{n-1}^2+x_n^2 = (n - 2)^2(t^2 + 1)^6.\]
Further, on excluding the squares $x_{n-1}^2$ and $x_n^2$ one by one, we get
\[
\begin{aligned}
x_1^2+x_2^2+\cdots+x_{n-2}^2+x_n^2 &=4t^2\{(n + 2)t^4 + (2n - 12)t^2 + n + 2\}^2,\\
x_1^2+x_2^2+\cdots+x_{n-2}^2+x_{n-1}^2 &=(t^2 - 1)^2\{(n - 2)t^4 + (2n + 12)t^2 + n - 2\}^2.
\end{aligned}
\]
This proves that the $n$ squares defined by \eqref{gennrepeatedsquares} have the stated property. 

Next, we consider the $n$ squares defined by \eqref{splnrepeatedsquares}. On excluding any one of the first $n-1$ squares, the sum of the remaining $n-1$ squares is 
\[
(n-2)x_1^2+x_n^2=\{(n-2)t^2  + 1\}^2,\]
while on excluding $x_n^2$, the sum of the remaining squares is $4m^2t^2$. Thus the $n$ squares defined by \eqref{splnrepeatedsquares} also  have the stated property. 
\end{proof}
\begin{corollary}\label{cor} When $n \geq 3$, a solution of the simultaneous diophantine equations \eqref{diophchn} and \eqref{csum}, in terms of an arbitrary parameter $t$, is given by
\begin{equation}
\begin{aligned}
(x_1, y_1) =(x_2, y_2) & =\dots = (x_{n-2}, y_{n-2}) =(8t(t^2 + 1)(t^2 - 1), (n - 2)(t^2 + 1)^3),\\
(x_{n-1}, y_{n-1}) & =((t^2 - 1)((n - 2)t^4 + (2n - 20)t^2 + n - 2)),\\
& \quad \quad  2t((n + 2)t^4 + (2n - 12)t^2 + n + 2)),\\
(x_n, y_n) & =(2t((n - 6)t^4 + (2n + 4)t^2 + n - 6), \\
& \quad \quad (t^2 - 1)((n - 2)t^4 + (2n + 12)t^2 + n - 2)),
\end{aligned}
\label{gennrepeatedchain}
\end{equation}
and further, when $n=m^2+1$ where $m$ is an arbitrary positive integer $\geq 2$, a parametric solution of equations \eqref{diophchn} and \eqref{csum} is given by
\begin{equation}
\begin{aligned}
(x_1, y_1)& =(x_2, y_2)=\dots = (x_{n-1}, y_{n-1}) =(2t, (n - 2)t^2 + 1),\\
(x_n, y_n) & = ((n-2)t^2-1, 2mt).
\end{aligned}
\label{splnrepeatedchain}
\end{equation}
\end{corollary}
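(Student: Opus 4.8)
The plan is to derive the corollary directly from Lemma~\ref{solrepeatedsq} and the constructive half of Lemma~\ref{necsuffcond}, so that essentially no new computation is needed. Lemma~\ref{solrepeatedsq} already guarantees that the squares $x_i^2$ defined by \eqref{gennrepeatedsquares} (and, in the special case $n=m^2+1$, by \eqref{splnrepeatedsquares}) have the property that the sum of any $n-1$ of them is a perfect square. By the forward direction of the proof of Lemma~\ref{necsuffcond}, it therefore suffices to set, for each $i$, $y_i$ equal to a square root of the sum of the remaining $n-1$ squares, that is, $y_i^2 = \sum_{j=1}^n x_j^2 - x_i^2$; then automatically $x_i^2+y_i^2 = \sum_{j=1}^n x_j^2$ is independent of $i$, which gives \eqref{diophchn}, and taking $s = x_1^2+y_1^2 = \sum_{j=1}^n x_j^2$ gives \eqref{csum}.

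Thus the only work is to read off these square roots from the computations already carried out in the proof of Lemma~\ref{solrepeatedsq} and to check that they coincide with the $y_i$ listed in \eqref{gennrepeatedchain} and \eqref{splnrepeatedchain}. For the $x_i$ of \eqref{gennrepeatedsquares}, excluding any one of the first $n-2$ squares was shown to give the sum $(n-2)^2(t^2+1)^6$, so $y_i = (n-2)(t^2+1)^3$ for $i=1,\ldots,n-2$; excluding $x_{n-1}^2$ and $x_n^2$ gives the two displayed perfect squares whose roots are $2t((n+2)t^4+(2n-12)t^2+n+2)$ and $(t^2-1)((n-2)t^4+(2n+12)t^2+n-2)$ respectively. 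These are exactly the entries of \eqref{gennrepeatedchain}, which establishes the first assertion.

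For the special case I would argue identically using the squares \eqref{splnrepeatedsquares}. Excluding any one of the first $n-1$ squares gives $(n-2)x_1^2+x_n^2 = ((n-2)t^2+1)^2$, whence $y_i = (n-2)t^2+1$ for $i=1,\ldots,n-1$; excluding $x_n^2$ leaves $(n-1)(2t)^2$, and here the hypothesis $n=m^2+1$ enters, since $n-1=m^2$ makes this $4m^2t^2 = (2mt)^2$, giving $y_n = 2mt$. These match \eqref{splnrepeatedchain}, which proves the second assertion.

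I expect no serious obstacle: the corollary is a transcription of the two preceding lemmas, and the only points requiring a little care are the bookkeeping of which perfect square arises on deleting each $x_i^2$ and the observation $n-1=m^2$ needed in the final step. As a consistency check I would verify that $x_1^2+y_1^2$ indeed equals the full sum $\sum_{j=1}^n x_j^2$ in each case, confirming that the common value $s$ in \eqref{csum} has been correctly identified.
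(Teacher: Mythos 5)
Your proposal is correct and follows essentially the same route as the paper: both apply the forward direction of Lemma~\ref{necsuffcond} to the squares produced by Lemma~\ref{solrepeatedsq}, set $y_i^2 = s - x_i^2$, and read the explicit square roots off the sums already computed in the proof of that lemma (including the observation that $n-1=m^2$ yields $y_n=2mt$ in the special case).
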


\begin{proof} In view of Lemma \ref{necsuffcond}, whenever there exist $n$ squares such that the sum of any $n-1$ of them is a square, there exists a corresponding solution of the simultaneous diophantine equations \eqref{diophchn} and \eqref{csum}. When the $n$ squares are defined by \eqref{gennrepeatedsquares}, we get the value of $s$ from the relation \eqref{csum}, and since, for each $i$, $y_i^2=s-x_i^2$, we readily get the values of $y_i, i=1, \ldots, n$. We thus get the solution \eqref{gennrepeatedchain} of the simultaneous diophantine equations \eqref{diophchn} and \eqref{csum}. Similarly, corresponding to the $n$ squares defined by \eqref{splnrepeatedsquares}, we get the solution \eqref{splnrepeatedchain} of Eqs. \eqref{diophchn} and \eqref{csum}. The solutions given in the corollary can also be readily verified by direct computation.
\end{proof}
 
\begin{lemma}\label{newsol} If $x_i, y_i, i=1, \ldots, n$, is a known solution of the simultaneous diophantine equations \eqref{diophchn} and \eqref{csum}, then a new solution $X_i, Y_i, i=1, \ldots, n$, of the simultaneous  equations \eqref{diophchn} and \eqref{csum} is given by
\begin{equation}
X_i = (n-2)Sx_i-2Py_i, \quad Y_i=2Px_i + (n - 2)Sy_i, \quad i=1, \ldots, n, \label{newsolgen}
\end{equation}
where
\begin{equation}
\begin{aligned}
P&=x_1y_1+x_2y_2+\cdots x_ny_n, \\
S&=x_1^2+x_2^2+\cdots x_n^2.
\end{aligned}
\label{valPS}
\end{equation}
Further, if $i$ and $j$ are any two distinct integers, $ 1 \leq i \leq n$, and $1 \leq j \leq n$, then 
 the ordered pairs $(X_i, Y_i)$ and $(X_j, Y_j)$ are identical if and only the ordered pairs $(x_i, y_i)$ and $(x_j, y_j)$ are identical.
\end{lemma}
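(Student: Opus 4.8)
The plan is to treat \eqref{newsolgen} as a single fixed linear map applied to each of the pairs $(x_i,y_i)$ and to check directly that the images again satisfy the two defining conditions. First I would record what a solution of \eqref{diophchn} and \eqref{csum} actually asserts: with $S$ and $P$ as in \eqref{valPS}, the hypotheses say precisely that $x_i^2+y_i^2=S$ for every $i$, the common value $s$ appearing in \eqref{diophchn} and \eqref{csum} being $S$ itself. I would then observe that, since $S$ and $P$ do not depend on $i$, the formulas \eqref{newsolgen} are exactly the matrix $M=\begin{pmatrix}(n-2)S & -2P\\ 2P & (n-2)S\end{pmatrix}$ applied to the column $(x_i,y_i)^{\mathsf{T}}$; equivalently, $X_i+Y_i\sqrt{-1}$ is the product of the fixed Gaussian number $(n-2)S+2P\sqrt{-1}$ with $x_i+y_i\sqrt{-1}$.

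To verify \eqref{diophchn} for the new pairs I would compute $X_i^2+Y_i^2$. By multiplicativity of the norm $u^2+v^2$ (or by a one-line expansion), $X_i^2+Y_i^2=\{(n-2)^2S^2+4P^2\}(x_i^2+y_i^2)=\{(n-2)^2S^2+4P^2\}S$, a value independent of $i$. Hence all the quantities $X_i^2+Y_i^2$ share the common value $s'=\{(n-2)^2S^2+4P^2\}S$, which is exactly \eqref{diophchn} for the $X_i,Y_i$.

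The only computation that uses the particular coefficients $(n-2)S$ and $2P$ is the verification of \eqref{csum}, namely $\sum_{i=1}^n X_i^2=s'$. Here I would first note that summing $y_i^2=S-x_i^2$ over $i$ gives $\sum_i y_i^2=(n-1)S$, and then expand
\[
\sum_{i=1}^n X_i^2=(n-2)^2S^2\sum_i x_i^2-4(n-2)SP\sum_i x_iy_i+4P^2\sum_i y_i^2.
\]
Substituting $\sum_i x_i^2=S$, $\sum_i x_iy_i=P$ and $\sum_i y_i^2=(n-1)S$, the two terms in $P^2$ combine as $-4(n-2)SP^2+4(n-1)SP^2=4SP^2$, leaving $\sum_i X_i^2=S\{(n-2)^2S^2+4P^2\}=s'$, as required. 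Together with the previous paragraph this shows that $X_i,Y_i$ is again a solution of \eqref{diophchn} and \eqref{csum}.

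For the final assertion I would argue via invertibility of $M$, whose determinant is $\det M=(n-2)^2S^2+4P^2$. For any solution other than the trivial one $x_i=y_i=0$ the $x_i$ are not all zero, so $S>0$, and since $n\geq 3$ we get $\det M\geq(n-2)^2S^2>0$; thus $M$ is invertible over the rationals. Because the \emph{same} $M$ carries each $(x_i,y_i)$ to $(X_i,Y_i)$, we have $(X_i,Y_i)=(X_j,Y_j)$ if and only if $M(x_i,y_i)^{\mathsf{T}}=M(x_j,y_j)^{\mathsf{T}}$, and by invertibility this holds if and only if $(x_i,y_i)=(x_j,y_j)$. The forward implication is immediate, while the converse is precisely where invertibility is needed. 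The one point demanding any care is this nonvanishing of $\det M$, which rests on the observation that a genuine solution has $S\neq 0$; everything else is routine algebra.
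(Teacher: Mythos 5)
Your proposal is correct and follows essentially the same route as the paper: both rest on the multiplicativity identity $X_i^2+Y_i^2=\{(n-2)^2S^2+4P^2\}(x_i^2+y_i^2)$ to preserve \eqref{diophchn}, on the cancellation $-4(n-2)SP^2+4(n-1)SP^2=4SP^2$ to secure \eqref{csum} (the paper runs this computation in the opposite direction, starting from general coefficients $g,h$ and solving the resulting linear equation to \emph{derive} $g=(n-2)S$, $h=2P$, whereas you verify the given values), and on the invertibility of the linear map, with determinant $(n-2)^2S^2+4P^2$, for the equivalence of identical pairs. Your explicit remark that a nontrivial solution forces $S>0$ is a small point the paper leaves implicit when it divides by $4P^2+(n-2)^2S^2$.
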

\begin{proof} On multiplying the relations \eqref{diophchn} by $g^2+h^2$ and using the first part of identity \eqref{compident}, we get a new solution of \eqref{diophchn} given by
\begin{equation}
X_i = gx_i-hy_i, \quad Y_i=hx_i+gy_i, \quad i=1, \ldots, n. 
\label{valXY}
\end{equation}
Now $(X_i, Y_i), i=1, \ldots, n$, will also satisfy  the second condition  \eqref{csum} if $X_1^2+Y_1^2=\sum_{i=1}^nX_i^2$. This is a quadratic equation in $g$ and $h$ in which the coefficient of $g^2$ vanishes since $x_i, y_i$ is a known solution of Eq. \eqref{csum}. We may thus remove  the factor $h$ after which   the quadratic equation reduces to the linear equation $2Pg  -(n-2)Sh=0$ where $P$ and $S$ are defined by \eqref{valPS}. We thus get $g=(n-2)S, h=2P$, and on inserting these values in \eqref{valXY}, we get the new  solution of the simultaneous equations \eqref{diophchn} and \eqref{csum} given by \eqref{newsolgen}. 

On solving the two linear equations \eqref{newsolgen} for $x_i, y_i$, we get
\begin{equation}
\begin{aligned}
x_i & =((n - 2)SX_i + 2PY_i)/(4P^2+(n - 2)^2S^2),\\
  y_i & = (-2PX_i + (n - 2)SY_i)/(4P^2+(n - 2)^2S^2).
\end{aligned}
	\label{oldsolgen}
\end{equation}
It immediately follows from the relations \eqref{newsolgen} that if the ordered pairs $(x_i, y_i)$ and $(x_j, y_j)$ are identical, then the ordered pairs $(X_i, Y_i)$ and $(X_j, Y_j)$ are also identical, and the converse follows  similarly from the relations \eqref{oldsolgen}. 
\end{proof}

We note that the new solution  \eqref{newsolgen} mentioned in  Lemma \ref{newsol} has been obtained earlier by Lagrange \cite{La} though in a somewhat different manner.    

\section{Two general methods for obtaining solutions in distinct integers of the simultaneous diophantine equations \eqref{diophchn} and \eqref{csum}}\label{general methods}
Lemma 3 gives us $n$ squares, not all distinct, such that the sum of any $n-1$ of them is a square. We will now describe two methods for obtaining solutions of the simultaneous diophantine equations \eqref{diophchn} and \eqref{csum} in which the integers $x_i, i=1, \ldots, n$, are all distinct, and we will thus obtain $n$ distinct squares such that the sum of any $n-1$ of them is a square. 

\subsection{First method}\label{firstmethod}
In the first method, we begin with a parametric  solution $(x_j, y_j), j=1, \ldots, n$, of the simultaneous Eqs. \eqref{diophchn} and \eqref{csum} given by Corollary \ref{cor}, and we assume that  $r$  of the ordered pairs $(x_j, y_j), j=1, \ldots, r$, are identical while the remaining ordered pairs $(x_j, y_j)$ are distinct. Since each $x_j$ appears only as $x_j^2$ both in Eq. \eqref{diophchn} and in Eq. \eqref{csum}, we now replace the ordered pair $(x_r, y_r)$ by $(-x_r, y_r)$ and we still have a solution of these equations. If we now apply Lemma \ref{newsol}, we  obtain a new solution of the simultaneous Eqs. \eqref{diophchn} and \eqref{csum} in which only $r-1$  ordered pairs $(x_j, y_j), j=1, \ldots, r-1$, are identical while the remaining ordered pairs are distinct. Repeating this process $r-1$ times yields a parametric solution of Eqs. \eqref{diophchn} and \eqref{csum} in which all the ordered pairs $(x_j, y_j), j=1, \ldots, n$, are distinct, and we thus get $n$ squares, in parametric terms, such that the sum of any $n-1$ of them is a square. 

If we start with a parametric solution of Eqs. \eqref{diophchn} and \eqref{csum} given by polynomials of degree $n$ and apply Lemma \ref{newsol}, it follows from the relations \eqref{newsolgen} and \eqref{valPS} that the resulting new parametric solution of Eqs. \eqref{diophchn} and \eqref{csum} will, in general, be of degree $3n$. Thus, the process described above yields parametric solutions of our problem of fairly high degrees.

The above process can be significantly improved in the  following manner. Assuming for simplicity that there are $2^m$ identical ordered pairs $(x_j, y_j)$ in the initial known parametric solution of Eqs. \eqref{diophchn} and \eqref{csum} while the remaining ordered pairs $(x_j, y_j)$ are distinct, we change the signs of $x_j$ in $2^{m-1}$ of the identical ordered pairs $(x_j, y_j)$ and then apply Lemma \ref{newsol}. The resulting new solution of Eqs. \eqref{diophchn} and \eqref{csum} will have two sets of identical ordered pairs, each set having $2^{m-1}$ ordered pairs, while the remaining ordered pairs will be different. In each of these two sets, we change the sign of $x_j$ in $2^{m-2}$ of the identical ordered pairs $(x_j, y_j)$, and apply Lemma \ref{newsol} to get a new solution. We repeat this process $m$ times to get a solution of Eqs. \eqref{diophchn} and \eqref{csum} in which all the ordered pairs are distinct.

We will adapt this technique in  Section \ref{parmsols}  to   generate solutions of Eqs. \eqref{diophchn} and \eqref{csum} in which all the ordered pairs $(x_j, y_j)$ are distinct, and we thus obtain parametric solutions of our problem when  $n=5, 6, 7$ or 8.

\subsection{Second method}\label{secondmethod}
In the second method, we will use solutions of the diophantine chains \eqref{diophchn4} and \eqref{diophchn8} given in Lemma \ref{soldiophchn} to construct parametric solutions of Eq. \eqref{diophchn} and we then  choose the parameters so that the  condition \eqref{csum} is also satisfied. We now describe  the method using the diophantine chain \eqref{diophchn8}. 

To obtain a solution of the diophantine chain \eqref{diophchn} for an arbitrary value of $n$, for each $j$, $j=1, \ldots, n$,  we choose a pair of values $a_i, b_i$, defined by \eqref{valabchn8} and $i \in \{1, \ldots, 8\}$. The $n$ ordered pairs $(a_i, b_i)$  need not be distinct. We can assign to  the ordered pair $(x_j, y_j)$  any of the four values  $(\pm a_i, b_i)$ and  $(\pm b_i, a_i)$.  In view of Lemma \ref{soldiophchn}, for each $j$, the sum $x_j^2+y_j^2$ is given by \eqref{csumchn8}.  We thus have  a solution of Eq. \eqref{diophchn}. 

Having obtained a solution of Eq.\eqref{diophchn}, possibly  with some of the ordered pairs $(x_j, y_j)$ being identical,  we  substitute the values of $x_j, y_j, j=1, \ldots, n$, in Eq. \eqref{csum}, and  get an equation that may be considered as a  quadratic equation in the variables $p_1, p_2$, or in the variables $q_1, q_2$, or in $r_1, r_2$, or  in $s_1, s_2$. While, in general,  there is  no easy way to solve this equation, in certain cases, with a suitable   choice of parameters, we may be able to solve it and thus get  a solution of the simultaneous equations \eqref{diophchn} and \eqref{csum}. Sometimes this may yield a solution in which $x_j$, $j=1, \ldots, n$, are all distinct but more often some of the values of $x_j$ may be repeated. In the latter case, we may  apply Lemma \ref{newsol}, as discussed above,   to obtain solutions in which the values of $x_j$ are all distinct. 

As we shall see in Section \ref{parmsols}, the second method often yields parametric solutions  of lower degree as compared to the first method.

\section{$n $ squares whose sum, on excluding any one of them is a square}\label{parmsols}
In the next four subsections we will  obtain parametric solutions of the simultaneous diophantine equations \eqref{diophchn} and \eqref{csum} when $n=5, 6, 7$ and 8, respectively, and thus obtain examples of five, six, seven and eight squares, respectively, such the sum of these squares, on excluding any one of them, is also a square. 

\subsection{Five squares such that the sum of any four is a square}\label{five} 

\subsubsection{Parametric solutions  obtained by the first method} To obtain five squares such that the sum of any four of them is a square, we will use the solution \eqref{splnrepeatedchain} of the simultaneous Eqs. \eqref{diophchn} and \eqref{csum} given in Corolaary \ref{cor}. When $m=2, n=5$, the solution \eqref{splnrepeatedchain} of Eqs. \eqref{diophchn} and \eqref{csum} may we written, on changing the signs of $x_3$ and $x_4$,  as follows:
\begin{equation}
 \begin{aligned}
x_1 & = x_2 = 2t,  \quad  x_3 = x_4  =-2t,    & x_5 & = 3t^2 - 1, \\
y_1 & = y_2   =y_3  =y_4  = 3t^2 + 1,         &  y_5 &= 4t.
\end{aligned}
\label{n5initsolsimple}
\end{equation}

Now on applying Lemma \ref{newsol} we get a solution of Eqs. \eqref{diophchn} and \eqref{csum} in which $X_1 =X_2$ and $X_3=X_4$. We now change the signs of $X_2$ and $X_4$ and rename the $X_i, Y_i$ as $x_i, y_i$, respectively, to get the following solution of Eqs. \eqref{diophchn} and \eqref{csum}:
\begin{equation}
\begin{aligned}
 x_1  & =   -2t(9t^4 - 30t^2 - 7),  &  x_2  & =   -x_1, \\
 x_3  & =   2t(63t^4 + 30t^2 - 1),   & x_4  & =   -x_3, \\
x_5  & =   (3t^2 - 1)(27t^4 - 2t^2 + 3), &  y_1   &=  81t^6 + 165t^4 + 23t^2 + 3, \\
y_2  & =  y_1, &  y_3  & =   81t^6 + 69t^4 + 55t^2 + 3, \\
y_4  & =  y_3, & y_5  & =   4t(45t^4 + 18t^2 + 5).
\end{aligned}
\end{equation}   

On applying Lemma \ref{newsol} once again, we obtain a solution of  Eqs. \eqref{diophchn} and \eqref{csum}  in which the ordered pairs $(x_i, y_i), i=1, \ldots, 5$, are all distinct. For brevity, we give below only the values of $x_i, i=1, \ldots, 5$, of the last solution: 
\begin{equation}
\begin{aligned}
x_1 & = 2t ( 1358127{t}^{16}+2536920{t}^{14}-1378620{t}^{12}  -2647512{t}^{10} \\
& \quad \quad -1562886{t}^{8}-523416{t}^{6}-108028{t}^{4}-
9192{t}^{2}-369 ), \\
x_2 & = 2t ( 1003833{t}^{16}+2256984{t}^{14}+2211948{t}^{12}+
3067848{t}^{10}\\
& \quad \quad +1484550{t}^{8}+503592{t}^{6}+85964{t}^{4}+6456
{t}^{2}+9 ), \\ 
x_3 & = 2t ( 59049{t}^{16}+4706424{t}^{14}+6963084{t}^{12}+
4532328{t}^{10} \\
& \quad \quad+ 1484550{t}^{8}+340872{t}^{6}+27308{t}^{4}+3096
{t}^{2}+153 ), \\ 
x_4 & = 2t ( 2421009{t}^{16}+6700968{t}^{14}+8750268{t}^{12}+
4710744{t}^{10} \\
& \quad \quad+1562886{t}^{8}+294168{t}^{6}+17020{t}^{4}-3480
{t}^{2}-207 ), \\ 
x_5 & = ( 3{t}^{2}-1 )  ( 27{t}^{4}-2{t}^{2}+3 ) 
 ( 19683{t}^{12}+16362{t}^{10} \\
& \quad \quad+34533{t}^{8}+8748{t}^{6}+
3837{t}^{4}+202{t}^{2}+27 ), \\
\end{aligned}
\label{n5solfin}
\end{equation}
where $t$ is an arbitrary parameter.

It follows from Lemma \ref{necsuffcond} that when $x_i, i=1, \ldots, 5$ are defined by \eqref{n5solfin}, the five  squares $x_1^2, \ldots, x_5^2$ are such that their sums,  taken  four at a time, are all squares. As a numerical example, taking $t=2$, we get the following five squares such that the sum of any four of them is a square:
\[
\begin{aligned}
487111462076^2, \quad & 461523666596^2, \quad & 458176368356^2, \\
1238143955524^2, \quad & 501821857691^2. \quad &
\end{aligned}
\]

We can again apply Lemma \ref{newsol} to the last parametric solution of Eqs.  \eqref{diophchn} and \eqref{csum} that we obtained when we get a new solution of these equations  in terms of polynomials of degree 54 in the parameter $t$ and continue the process to get more parametric solutions.

\subsubsection{Parametric solutions  obtained by the second method} We will now apply the second method, described in Section \ref{secondmethod}, to obtain five distinct squares with the desired property 

In view of Lemma \ref{soldiophchn}, when $n=5$, a solution of the   diophantine chain  Eq. \eqref{diophchn}  is given by 
\begin{equation}
(x_i, y_i)=(a_1, b_1), i=1, 2, 3, \;\; (x_4, y_4)=(-a_1, b_1), \;\; (x_5, y_5)=(-a_2, b_2) , \label{n5initsol}
\end{equation}
where $(a_i, b_i), i=1, 2, 3$, are defined by \eqref{valabchn4}. On substituting these values of $x_i, y_i, i=1, \ldots, 5$, in Eq. \eqref{csum} where $n=5$, and transposing all terms to the left-hand side, we get the following condition:
\begin{multline}
\{(4p_1^2 - p_2^2)q_1^2 - 2p_1p_2q_1q_2 - (p_1^2 - 4p_2^2)q_2^2\}r_1^2 - \{2p_1p_2q_1^2 - (6p_1^2 - 6p_2^2)q_1q_2\\
 - 2p_1p_2q_2^2\}r_1r_2 - \{(p_1^2 - 4p_2^2)q_1^2 - 2p_1p_2q_1q_2 - (4p_1^2 - p_2^2)q_2^2\}r_2^2=0. \label{n5qdr1}
\end{multline}
Eq. \eqref{n5qdr1} is a quadratic equation in $r_1$, and its discriminant with respect to $r_1$ namely,
\begin{multline}
16\{(q_1^2 - q_2^2)^2p_1^4 - 4q_1q_2(q_1^2 - q_2^2)p_1^3p_2 - (4q_1^4 + 4q_2^4)p_1^2p_2^2\\
 + 4q_1q_2(q_1^2 - q_2^2)p_1p_2^3 + (q_1^2 - q_2^2)^2p_2^4\}r_2^2,
\label{n5disr1}
\end{multline}
is a quartic function of $p_1$ in which  the coefficient of $p_1^4$ is a perfect square, and hence it can readily be made a perfect square for the following values of $p_1, p_2$ found by following a method described by Fermat \cite[p. 639]{Di}: 
\begin{equation}
p_1=4q_1q_2(q_1 - q_2)(q_1 + q_2),\quad p_2=3(q_1^4 + q_2^4). \label{n5valp12}
\end{equation}
We now obtain two rational solutions of Eq. \eqref{n5qdr1} one of which is as follows:
\begin{equation}
\begin{aligned}
r_1 & =6q_1^5 - 17q_1^4q_2 + 24q_1^3q_2^2 - 12q_1^2q_2^3 - 2q_1q_2^4 + 3q_2^5,\\
r_2 & =3q_1^5 - 2q_1^4q_2 - 12q_1^3q_2^2 + 24q_1^2q_2^3 - 17q_1q_2^4 + 6q_2^5. 
\end{aligned}
\label{n5valr12}
\end{equation}
Thus when $p_i, r_i, i=1, 2$, are given by \eqref{n5valp12} and \eqref{n5valr12}, Eq. \eqref{n5qdr1} is satisfied, and on substituting these values of $p_i, r_i$  in the relations \eqref{n5initsol}, we obtain a solution of the simultaneous diophantine equations \eqref{diophchn} and \eqref{csum} which is given by homogeneous polynomials of degree 10 in the parameters $q_1$ and $q_2$. Denoting  the polynomial $\sum_{j=0}^n c_jq_1^{n-j}q_2^j$ by  $(c_0, c_1, \ldots, c_n)$, this solution may be written briefly as follows:
\begin{equation}
\begin{aligned}
x_1 & =  (-9, 48, -71, 64, -34, 0, 34, -64, 71, -48, 9), \\
x_2 & = (9, 0, -41, 64, -46, 0, 46, -64, 41, 0, -9), \\
x_3 & = (-9, 12, 7, -64, 158, -232, 158, -64, 7, 12, -9),\\
y_1 & = (18, -30, 34, -64, 116, -124, 116, -64, 34, -30, 18),\\
y_2 & = (-18, 54, -98, 80, -52, 44, -52, 80, -98, 54, -18),\\
y_3 & = (18, -48, 94, -128, 116, 0, -116, 128, -94, 48, -18),\\
x_4 &= -x_1, \quad x_5 =-x_2, \quad y_4 = y_1, \quad y_5 = y_2
\end{aligned}
\label{n5soldeg10}
\end{equation}

The  solution \eqref{n5soldeg10} yields five squares with the desired property but only three of the five squares  are distinct since $x_4^2=x_1^2$ and $x_5^2=x_2^2$. Since we have taken $x_4 = -x_1$ and $x_5 =-x_2$, on applying Lemma \ref{newsol} to the solution \eqref{n5soldeg10}, we get a new solution $(X_i, Y_i), i=1, \ldots, 5$, of the  simultaneous diophantine equations \eqref{diophchn} and \eqref{csum}  in which   $X_i, i=1, \ldots, 5$, are all distinct.  

The new solution $(X_i, Y_i), i=1, \ldots, 5$ of Eqs. \eqref{diophchn} and \eqref{csum}  is given by \eqref{newsolgen} where $n=5$,  $P$ and $S$ are defined by the relations \eqref{valPS} and the values of $x_i, y_i$ are given by \eqref{n5soldeg10} in terms of the arbitrary parameters $q_1$ and $q_2$. The values of $X_i, Y_i$ are, in fact,  given by homogeneous polynomials of degree 30 in the parameters $q_1$ and $q_2$. We rename the $X_i$ as $x_i$,  and their values may be written in the aforementioned abbreviated notation as follows:
\begin{align*}
x_1 & =( -5103, 77760, -423711, 1474848, -3856635, 8146752, -14390795, \\
& \quad \quad 21998720,  -30084871, 37826368, -44409783, 48649632, -48301531, \\
& \quad \quad 40374208,  -23382251, 0, 23382251, -40374208, 48301531, -48649632,\\
& \quad \quad 44409783, -37826368,  30084871, -21998720, 14390795, -8146752,\\
 & \quad \quad 3856635, -1474848, 423711, -77760, 5103),\\
x_2 & =( 5103, -11664, -44145, 305856, -881109, 1509552, -1099637,\\ 
 & \quad \quad -2159872,  9288455, -18658448, 25801767, -26143296, 19160459,\\
  & \quad \quad -9562832,  2863723, 0, -2863723, 9562832, -19160459, 26143296,\\
 & \quad \quad  -25801767,  18658448, -9288455, 2159872, 1099637, -1509552,\\
  & \quad \quad 881109,  -305856, 44145, 11664, -5103),
	\end{align*}
	
	\begin{align*}
x_3 & = (-5103, 28188, -76221, 103032, 55017, -668604, 1836067,\\
  & \quad \quad -3199280, 4231697, -5560372, 10712211, -25955064, 55590593,\\
  & \quad \quad	-95585068, 131718715, -146493088, 131718715, -95585068, \\
  & \quad \quad	55590593, -25955064, 10712211, -5560372, 4231697, -3199280,\\
	  & \quad \quad 1836067, 	-668604, 55017, 103032, -76221, 28188, -5103),\\
		x_4 & =(16767, -143856, 616815, -1816992, 4081419, -7243824, 10175323,\\
	 & \quad \quad	-10940032, 7543703, 1045264, -13915641, 27840480, -37484437,\\
	 & \quad \quad 36850000, -23077445, 0, 23077445, -36850000, 37484437, \\
	 & \quad \quad -27840480, 13915641, -1045264, -7543703, 10940032, -10175323,\\
	 & \quad \quad 7243824, -4081419, 1816992, -616815, 143856, -16767),\\
	x_5 & = (-16767, 93312, -252639, 336960, 248517, -2727936, 8526373,\\
		 & \quad \quad	-18083584, 29622377, -38980352, 41242761, -34097856, 20483813,\\
			 & \quad \quad	-7454336, 770245, 0, -770245, 7454336, -20483813,\\
				 & \quad \quad	34097856, -41242761, 38980352, -29622377, 18083584, -8526373, \\
					 & \quad \quad	2727936, -248517, -336960, 252639, -93312, 16767).
		\end{align*}

The above  solution gives five distinct squares $x_i^2, i=1, \ldots, 5$, in terms of polynomials of degree 30, such that the sum of any four of them is a square. 

As a numerical example, taking $q_1=1, q_2=2$, we get, after appropriate scaling, the following five distinct squares such the sum of any four of them is a perfect square:
\[
3023249^2, \quad 1006607^2, \quad 1231825^2, \quad 473569^2, \quad 3426367^2.
\]

\subsection{Six squares such that  the sum of any five is a square} \label{six}
\subsubsection{Parametric solutions  obtained by the first  method} We will now use the first method described in Section \ref{firstmethod} to find six squares such that the sum of any five of them is a square. A solution of the simultaneous equations \eqref{diophchn} and \eqref{csum} obtained from the solution \eqref{gennrepeatedchain} given in Corollary \ref{cor} with $n=6$, and, with the signs of  $x_3, x_4$ suitably changed, may be written as follows:
\begin{equation}
\begin{aligned}
 (x_1, y_1) & = ( 8t(t^4 - 1),  4(t^2 + 1)^3),  \quad  & (x_2, y_2) & =(x_1, y_1), \\
(x_3, y_3) & =(-x_1, y_1), \quad  & (x_4, y_4) & =(-x_1, y_1),  \\ 
(x_5, y_5) & = (4(t^2 - 1)^3, 16t(t^4 + 1)),  &&\\
(x_6, y_6) & =  (32t^3, 4(t^2 - 1)(t^4 + 6t^2 + 1)). && 
\end{aligned}
\label{n6initsol}
\end{equation}

On applying Lemma \ref{newsol} to the solution \eqref{n6initsol}, we get  a new solution $X_i, Y_i, i=1, \ldots, 6$, of the simultaneous equations \eqref{diophchn} and \eqref{csum} which, after renaming $X_i, Y_i$ as $x_i, y_i$, respectively, and changing the signs of $x_1$ and $x_2$, may be written as follows:
\begin{equation}
\begin{aligned}
x_1 & = 16t^3(t^4 - 1)^3, \quad \quad \quad  x_2 = -16t^3(t^4 - 1)^3, \\
  x_3 & = 4t(t^2 - 1)(t^2 + 1)^7, \quad x_4 = -4t(t^2 - 1)(t^2 + 1)^7,\\
	x_5 & = (t^2 - 1)(t^{16} - 4t^{12} - 128t^{10} + 6t^8 - 128t^6 - 4t^4 + 1), \\
	x_6 & = 2t(t^{16} - 36t^{12} - 186t^8 - 36t^4 + 1), \\
	y_1 = y_2 & =  (t^2 + 1)(t^{16} + 16t^{14} + 28t^{12} + 112t^{10}- 58t^8 + 112t^6 \\
		& \quad \quad+ 28t^4 + 16t^2 + 1),\\
	y_3 = y_4 & = (t^2 + 1)(t^{16} + 8t^{14} + 44t^{12} - 8t^{10} + 166t^8 - 8t^6\\
	    & \quad \quad    + 44t^4 + 8t^2 + 1),\\
	 y_5 & = 2t(3t^{16} + 16t^{14} + 52t^{12} - 16t^{10} + 146t^8 - 16t^6\\ 
	    & \quad \quad          + 52t^4 + 16t^2 + 3),\\
	y_6 & = (t^2 - 1)(t^{16} + 16t^{14} + 92t^{12} + 112t^{10} 
		 + 326t^8 + 112t^6\\& \quad \quad   + 92t^4 + 16t^2 + 1)
\end{aligned}
\label{n6newsol1}
\end{equation}
While \eqref{n6newsol1} gives us six squares $x_i^2, i=1, \ldots, 6$,  such that the sum of any five is a square, these squares are not all distinct since $x_1^2=x_2^2$ and $x_3^2=x_4^2$. On applying Lemma \ref{newsol} once again, we get a new parametric solution $X_i, Y_i, i=1,\ldots, 6$, of Eqs. \eqref{diophchn} and \eqref{csum} given by \eqref{newsolgen} where the values of $x_i, y_i$ are given by \eqref{n6newsol1}, the values of $P$ and $S$ are given by \eqref{valPS} and $n=6$. This solution gives $X_i, Y_i,i=1, \ldots, 6$, as  distinct polynomials of degree 54 in the arbitrary parameter $t$ and we thus get six distinct squares with the desired property. We do not give this parametric solution explicitly as it is cumbersome to write.

As a numerical example, taking $t= 2$, we get the following six squares, after removing the common factor 25, such that the sum of any five of them is a square:
\[
\begin{aligned}
252608637530397000^2,  \quad & 15095604154947000^2,  \quad &  492116002633350000^2,\\
669794768570400000^2,  \quad &  37271037420836643^2,  \quad &  43162876561115524^2.
\end{aligned}
\]

\subsubsection{Parametric solutions  obtained by the second method} We will now obtain a simpler parametric solution for six squares such that the sum of any five of them is a square by applying the second method described in Section \ref{secondmethod}. 

In view of Lemma \ref{soldiophchn}, a solution of the diophantine chain \eqref{diophchn} with $n=6$ is given by  
\begin{equation}
\begin{aligned}
(x_1, y_1) & =(a_1, b_1),   \quad  & (x_2, y_2) & =(b_3, a_3),   \quad  &  (x_3, y_3) & =(a_4, b_4), \\
(x_4, y_4) & =(a_5, b_5),   \quad  & (x_5, y_5) & =(a_6, b_6),   \quad  & (x_6, y_6) & =(a_7, b_7),
\end{aligned}
\label{solxyn6}
\end{equation} 
where $a_i, b_i$ are defined by \eqref{valabchn8}. On substituting these values of $x_i, y_i$ in Eq. \eqref{csum} with $n=6$ and transposing all the terms to the left-hand  side, we get the condition
\begin{multline}
(4(p_1q_2 - p_2q_1)^2r_1^2 + 4(p_1q_1 + p_2q_2)^2r_2^2)s_1^2 + (4(p_1q_2 + p_2q_1)(p_1q_1 - p_2q_2)r_1^2\\
 + 4(p_1q_1 + p_1q_2 + p_2q_1 - p_2q_2)(p_1q_1 - p_1q_2 - p_2q_1 - p_2q_2)r_1r_2
 - 4(p_1q_2 + p_2q_1)\\
\times (p_1q_1 - p_2q_2)r_2^2)s_1s_2 + (4(p_1q_1 + p_2q_2)^2r_1^2 + 4(p_1q_2 - p_2q_1)^2r_2^2)s_2^2=0.
 \label{n6qds12}
\end{multline}

Now Eq. \eqref{n6qds12} is a quadratic equation in $s_1$ and it will have a rational solution if its discriminant with respect to $s_1$ is a perfect square. If we take $(q_1, q_2)=(p_1, p_2)$, the discriminant reduces to 
\begin{multline}
 16s_2^2\{4p_1^2p_2^2(p_1^2 - p_2^2)^2r_1^4 + 4p_1p_2(p_1^2 - p_2^2)(p_1^2 + 2p_1p_2 - p_2^2)(p_1^2 - 2p_1p_2 - p_2^2)r_1^3r_2\\
 - (3p_1^8 + 36p_1^6p_2^2 - 30p_1^4p_2^4 + 36p_1^2p_2^6 + 3p_2^8)r_1^2r_2^2 \\
- 4p_1p_2(p_1^2 - p_2^2)(p_1^2 + 2p_1p_2 - p_2^2)(p_1^2 - 2p_1p_2 - p_2^2)r_1r_2^3\\
 + 4p_1^2p_2^2(p_1^2 - p_2^2)^2r_2^4\},
\label{n6diss1}
\end{multline} 
and the aforementioned method of Fermat yields  the following values of $r_1$ and $r_2$ which make it a perfect square:
\begin{equation}
\begin{aligned}
r_1 & = -2p_1p_2(p_1^2 - p_2^2)(p_1^2 + 2p_1p_2 - p_2^2)(p_1^2 - 2p_1p_2 - p_2^2),\\
r_2 & = p_1^8 + 8p_1^6p_2^2 - 2p_1^4p_2^4 + 8p_1^2p_2^6 + p_2^8.
\end{aligned}
\label{n6valr12}
\end{equation}

Thus, with $(q_1, q_2)=(p_1, p_2)$, and $r_1, r_2$ defined by \eqref{n6valr12}, the quadratic equation \eqref{n6qds12} can be solved to get two solutions of which one is given by  the following values of $s_1, s_2$:
\begin{equation}
s_1 = 2p_1p_2(p_1 - p_2)(p_1 + p_2), \quad s_2 =(p_1^2 + p_2^2)^2.
\label{n6vals12}
\end{equation}
We have omitted the second solution as it is cumbersome to write. We now have a solution of the simultaneous diophantine equations \eqref{diophchn} and \eqref{csum}, and we  get six squares $x_i^2, i=1, \ldots, 6$ with the desired property. Since, however,  $(q_1, q_2)=(p_1, p_2)$, we get, $x_1^2=x_5^2$ and $x_3^2=x_4^2$, and hence only four of the six squares are distinct.

To obtain a solution in which all six squares are distinct, we will find another solution of Eq. \eqref{n6qds12}. We now consider  Eq. \eqref{n6qds12} as a quadratic equation in $q_1$ and $q_2$, and when the values of $r_1, r_2$ and $s_1, s_2$ are given by \eqref{n6valr12} and \eqref{n6vals12}, respectively, we know one solution of this equation namely, $(q_1, q_2)=(p_1, p_2)$, and hence we readily obtain a second solution which is as follows:

\begin{equation}
\begin{aligned}
q_1 & = p_1(p_1^{24} + 26p_1^{22}p_2^2 + 184p_1^{20}p_2^4 + 126p_1^{18}p_2^6 + 2105p_1^{16}p_2^8\\
& \quad \quad  - 2972p_1^{14}p_2^{10} + 7288p_1^{12}p_2^{12} - 5284p_1^{10}p_2^{14} + 2435p_1^8p_2^{16}\\
& \quad \quad  - 94p_1^6p_2^{18} + 272p_1^4p_2^{20} + 6p_1^2p_2^{22} + 3p_2^{24}), \\
q_2 & =p_2(3p_1^{24} + 6p_1^{22}p_2^2 + 272p_1^{20}p_2^4 - 94p_1^{18}p_2^6 + 2435p_1^{16}p_2^8\\
& \quad \quad  - 5284p_1^{14}p_2^{10} + 7288p_1^{12}p_2^{12} - 2972p_1^{10}p_2^{14} + 2105p_1^8p_2^{16}\\
& \quad \quad  + 126p_1^6p_2^{18} + 184p_1^4p_2^{20} + 26p_1^2p_2^{22} + p_2^{24}).
\end{aligned}
\label{n6valq12}
\end{equation}

Now Eq. \eqref{csum} is satisfied, and it follows from Lemma \ref{necsuffcond} and the relations \eqref{solxyn6} that the desired  six squares are given  by $(x_1^2, x_2^2, x_3^2, x_4^2, x_5^2, x_6^2)= (a_1^2, b_3^2, a_4^2, a_5^2, a_6^2, a_7^2)$ 
where  $a_i, b_i$ are defined by \eqref{valabchn8} with the values of  $(q_1, q_2)$, $r_1, r_2$ and $s_1, s_2$  being given by \eqref{n6valq12}, \eqref{n6valr12} and \eqref{n6vals12}, respectively. We thus get a parametric solution for the six squares in terms of six distinct polynomials of degree 38 in arbitrary parameters $p_1$ and $p_2$. 
As before, denoting  the polynomial $\sum_{j=0}^n c_jp_1^{n-j}p_2^j$ by  $(c_0,  \ldots, c_n)$, the values of $x_1, \ldots, x_6$, may be written   explicitly as follows: 
\begin{align*}
x_1 & = (2, 0, 128, 0, 782, 0, 2176, 0, 360, 0, 14592, 0, -61960, 0, \\
  & \quad \quad 225920, 0, -214628, 0, 0, 0, 214628, 0, -225920, 0, 61960,\\
 & \quad	\quad 0, -14592, 0, -360, 0, -2176, 0, -782, 0, -128, 0, -2, 0),
\end{align*}

\begin{align*}
x_2 & = (1, 0, 13, 0, 219, 0, 135, 0, 7668, 0, -10524, 0, 99852, 0,  -172868,\\
 & \quad \quad 0,  428622, 0, -591114, 0, 591114, 0, -428622, 0, 172868, 0,\\
 & \quad \quad  -99852, 0, 10524, 0, -7668, 0, -135, 0, -219, 0, -13, 0, -1),\\
x_3 & =(2, 0, -16, 0, -914, 0, -2848, 0, -13880, 0, -33504, 0, -24360,\\
 & \quad \quad  0, 10080, 0, -113284, 0, 0, 0, 113284, 0, -10080, 0, 24360, \\
 & \quad \quad  0, 33504, 0, 13880, 0, 2848, 0, 914, 0, 16, 0, -2, 0),\\
x_4 & = (2, 0, 32, 0, 462, 0, 5824, 0, 2600, 0, 58304, 0, -85064, 0, 127936, \\
& \quad \quad 0,  -154276, 0, 0, 0, 154276, 0, -127936, 0, 85064, 0, -58304,\\
& \quad \quad 0, -2600, 0, -5824,  0, -462, 0, -32, 0, -2, 0),\\
x_5 & = (6, 0, 112, 0, 906, 0, 736, 0, 8600, 0, -16352, 0, -31288, 0,\\
& \quad \quad -35488, 0, 44468, 0, 0, 0, -44468, 0, 35488, 0, 31288, 0,\\
& \quad \quad 16352, 0, -8600, 0, -736, 0, -906, 0, -112, 0, -6, 0),\\
x_6 & = (4, 0, 64, 0, 292, 0, 2048, 0, 4112, 0, 30464, 0, -30512, 0,\\
& \quad \quad 284672, 0, -498184, 0, 938368, 0, -498184, 0, 284672, 0,\\
& \quad \quad -30512, 0, 30464, 0, 4112, 0, 2048, 0, 292, 0, 64, 0, 4, 0),
\end{align*}

As a numerical example taking $(p_1, p_2) =(1, 2)$, we get, on removing the common factor 5,  the six squares,
\[
\begin{aligned} 
 3520435290636^2, \quad  & 3205366606047^2, \quad & 5429263880052^2,\\
 4996634759436^2, \quad &  3039928895652^2, \quad  & 3341350001384^2.
\end{aligned}
\]
whose sums, taken five at a time, are all perfect squares. 

\subsection{Seven squares such that  the sum of any six is a square}\label{seven}

We may obtain  a parametric solution for seven squares by applying the first method  to  the solution \eqref{gennrepeatedchain} of Eqs. \eqref{diophchn} and \eqref{csum}   with $n=7$. Since the values of $x_i$ are the same for $i=1, \ldots, 5$, we change the signs of $x_4, x_5$ and apply Lemma \ref{newsol} to get a new solution $X_i, Y_i, i=1, \ldots, 7$, of Eqs. \eqref{diophchn} and \eqref{csum} in terms of polynomials of degree 18 in which $X_1, X_2, X_3$ are identical and also $X_4, X_5$ are identical. We now change the signs of $X_3$ and $X_5$, apply Lemma \ref{newsol} again and we get a new solution of Eqs. \eqref{diophchn} and \eqref{csum} in terms of polynomials of degree 54. Finally,  on applying Lemma \ref{newsol} a third time (after an appropriate change of sign in the last solution obtained), we get a solution of Eqs. \eqref{diophchn} and \eqref{csum} in distinct polynomials of degree 162, and we thus get seven distinct squares as desired. As this solution is too cumbersome to write, we do not give it explicitly.  

  We will now obtain a simpler parametric solution by applying the second method described in Section \ref{secondmethod}. A solution of the diophantine chain \eqref{diophchn} with $n=7$ is given by
\begin{equation}
\begin{aligned}
(x_1, y_1) & =(a_5, b_5),   \quad  & (x_2, y_2) & =(-a_5, b_5),   \quad  &  (x_3, y_3) & =(a_1, b_1), \\
(x_4, y_4) & =(a_2, b_2),   \quad  & (x_5, y_5) & =(a_4, b_4),   \quad  & (x_6, y_6) & =(a_6, b_6), \\
(x_7, y_7) &=(b_7, a_7),
\end{aligned}
\label{solxyn7}
\end{equation} 
where $a_i, b_i$ are defined by \eqref{valabchn8}. On substituting these values of $x_i, y_i$ in Eq. \eqref{csum} with $n=7$  and transposing all the terms to the left-hand  side, we get the condition
\begin{multline}
(5(p_1q_2 - p_2q_1)^2r_1^2 + (-6p_1^2q_1q_2 - 2p_1p_2q_1^2 + 2p_1p_2q_2^2 + 6p_2^2q_1q_2)r_1r_2
\\ + 5(p_1q_1 + p_2q_2)^2r_2^2)s_1^2 + (-2(p_1q_1 + p_2q_2)(p_1q_2 - p_2q_1)r_1^2\\
 + (-2p_1^2q_1^2 + 2p_1^2q_2^2 + 24p_1p_2q_1q_2 + 2p_2^2q_1^2 - 2p_2^2q_2^2)r_1r_2 \\
+ 2(p_1q_1 + p_2q_2)(p_1q_2 - p_2q_1)r_2^2)s_1s_2 + (5(p_1q_1 + p_2q_2)^2r_1^2 \\
+ (6p_1^2q_1q_2 + 2p_1p_2q_1^2 - 2p_1p_2q_2^2 - 6p_2^2q_1q_2)r_1r_2 + 5(p_1q_2 - p_2q_1)^2r_2^2)s_2^2=0.
\label{n7qds12}
\end{multline}
We now take $(q_1, q_2)=(p_1, p_2)$ when the first term in the coefficient of $s_1^2$ vanishes and we can readily choose nonzero values for $r_1, r_2$ such that the coefficient of $s_1^2$ in Eq.  \eqref{n7qds12} becomes $0$, and we  can solve  Eq.  \eqref{n7qds12} to get nonzero values for $s_1$ and $s_2$. Thus, a solution of Eq. \eqref{n7qds12} with $(q_1, q_2)=(p_1, p_2)$ is given by 
\begin{equation}
\begin{aligned}
r_1 &  = 5(p_1^2 + p_2^2)^2, \quad r_2=8p_1p_2(p_1 - p_2)(p_1 + p_2), \\
s_1 & =25p_1^8 + 164p_1^6p_2^2 + 22p_1^4p_2^4 + 164p_1^2p_2^6 + 25p_2^8, \\
 s_2 & =16p_1p_2(p_1 - p_2)(p_1 + p_2)(p_1^2 - 4p_1p_2 + p_2^2)(p_1^2 + 4p_1p_2 + p_2^2).
\end{aligned}
\label{n7valr12s12}
\end{equation}

We now follow the same procedure as for Eq. \eqref{n6qds12}, and with the values of $r_1, r_2, s_1, s_2$ given by \eqref{n7valr12s12}, we find that Eq. \eqref{n7qds12} is satisfied by the following values of $q_1$ and $q_2$:
\begin{equation}
\begin{aligned}
q_1 & = p_1(78125p_1^{24} + 1399500p_1^{22}p_2^2 + 8937610p_1^{20}p_2^4 + 23564092p_1^{18}p_2^6\\
 & \quad \quad + 78166867p_1^{16}p_2^8 + 3600152p_1^{14}p_2^{10} + 182941900p_1^{12}p_2^{12}\\
  & \quad \quad- 64814760p_1^{10}p_2^{14} + 51621283p_1^8p_2^{16} + 17916188p_1^6p_2^{18}\\
  & \quad \quad+ 14166858p_1^4p_2^{20} + 2174060p_1^2p_2^{22} + 248125p_2^{24}), \\
q_2 & = p_2(248125p_1^{24} + 2174060p_1^{22}p_2^2 + 14166858p_1^{20}p_2^4 + 17916188p_1^{18}p_2^6\\
 & \quad \quad + 51621283p_1^{16}p_2^8 - 64814760p_1^{14}p_2^{10} + 182941900p_1^{12}p_2^{12}\\
  & \quad \quad+ 3600152p_1^{10}p_2^{14} + 78166867p_1^8p_2^{16} + 23564092p_1^6p_2^{18}\\
  & \quad \quad+ 8937610p_1^4p_2^{20} + 1399500p_1^2p_2^{22} + 78125p_2^{24}).\\
\end{aligned}
\label{n7valq12}
\end{equation}

As before,  Eq. \eqref{csum} is now satisfied, and it follows from Lemma \ref{necsuffcond} that we get our seven squares $x_i^2, i=1, \ldots, 7$, where $x_i$ are given by polynomials of degree 38,  in terms of arbitrary parameters $p_1, p_2$, obtained  from  the relations \eqref{solxyn7}. Since, however, $x_2=-x_1$,   our  seven   squares are not distinct and it is only on applying Lemma \ref{newsol} that we get a parametric solution with all seven squares being given by distinct polynomials of degree 114 in terms of the arbitrary parameters  $p_1$ and $p_2$. As these polynomials are too cumbersome to write, we do not give them explicitly. 

A  numerical example of seven distinct squares,  obtained by taking $(p_1, p_2)=(2, 1)$ in the last parametric  solution, is as follows:

\begin{align*}
& 25765736424692698550940334744919576070938381815190280120023040^2, \\
& 6618151217385566375461260055526447245711772332508510442036960^2, \\
& 124934670757222453453275054167136958692122312064551132326904640^2,\\
&  102408517408585640884244377393390888640073807969431822726302964^2,\\
&  55568355668850822237272806729023622683498611696723291768084640^2,\\
 & 165356494031982641014133841865286335701791214944638697401416960^2,\\
 & 86777104483757139311236990507927175848789766997194052355054023^2.
\end{align*}

\subsection{Eight squares such that  the sum of any seven  is a square}\label{eight}
As in Section \ref{seven}, we may begin with the solution \eqref{gennrepeatedchain} of Eqs. \eqref{diophchn} and \eqref{csum}  taking  $n=8$, and apply Lemma \ref{newsol} three times, making appropriate changes of sign at each stage, and thus obtain a solution of Eqs. \eqref{diophchn} and \eqref{csum}  with   $n=8$ in terms of polynomials of degree 162 in the parameter $t$.  As this solution is too cumbersome to write, we do not give it explicitly.

A parametric solution of lower degree may be obtained by the second method by taking $x_i, y_i, i=1, \ldots, 8$, as follows:
\begin{equation}
\begin{aligned}
(x_1, y_1) & =(a_1, b_1),   \quad  & (x_2, y_2) & =(-a_1, b_1),   \quad  &  (x_3, y_3) & =(a_5, b_5), \\
(x_4, y_4) & =(-a_5, b_5),   \quad  & (x_5, y_5) & =(a_2, b_2),   \quad  & (x_6, y_6) & =(a_4, b_4), \\
(x_7, y_7) &=(a_6, b_6),   \quad & (x_8, y_8) &=(b_7, a_7). &
\end{aligned}
\label{solxyn8}
\end{equation} 
where $a_i, b_i$ are defined by \eqref{valabchn8}. With the above values of $x_i, y_i$, we already have a solution of Eq. \eqref{diophchn} with $n=8$, and further, on substituting these values of $x_i, y_i$ in Eq. \eqref{csum} with $n=8$  and transposing all the terms to the left-hand  side, we get the condition
\begin{multline}
(6(p_1q_2 - p_2q_1)^2r_1^2 - 8q_1q_2(p_1 - p_2)(p_1 + p_2)r_1r_2 + 6(p_1q_1 + p_2q_2)^2r_2^2)s_1^2\\
 - 4(p_1q_1 + p_1q_2 + p_2q_1 - p_2q_2)(p_1q_1 - p_1q_2 - p_2q_1 - p_2q_2)r_1r_2s_1s_2\\
 + (6(p_1q_1 + p_2q_2)^2r_1^2 + 8q_1q_2(p_1 - p_2)(p_1 + p_2)r_1r_2 \\
+ 6(p_1q_2 - p_2q_1)^2r_2^2)s_2^2=0. \label{n8qds12}
\end{multline}
A solution of Eq. \eqref{n8qds12}, obtained exactly as in Section \ref{seven}, is as follows:
\begin{equation}
\begin{aligned}
q_1 & =p_1(729p_1^{24} + 11916p_1^{22}p_2^2 + 68162p_1^{20}p_2^4 + 165308p_1^{18}p_2^6\\
 & \quad \quad + 512615p_1^{16}p_2^8 + 324248p_1^{14}p_2^{10} + 968668p_1^{12}p_2^{12}\\
 & \quad \quad + 148568p_1^{10}p_2^{14} + 481719p_1^8p_2^{16} + 168924p_1^6p_2^{18}\\
  & \quad \quad+ 114434p_1^4p_2^{20} + 18668p_1^2p_2^{22} + 2025p_2^{24}), \\
q_2& =p_2(2025p_1^{24} + 18668p_1^{22}p_2^2 + 114434p_1^{20}p_2^4 + 168924p_1^{18}p_2^6\\
 & \quad \quad+ 481719p_1^{16}p_2^8 + 148568p_1^{14}p_2^{10} + 968668p_1^{12}p_2^{12}\\
  & \quad \quad+ 324248p_1^{10}p_2^{14} + 512615p_1^8p_2^{16} + 165308p_1^6p_2^{18} \\
 & \quad \quad+ 68162p_1^4p_2^{20} + 11916p_1^2p_2^{22} + 729p_2^{24}),\\
r_1 & = 6(p_1^2 + p_2^2)^2, \\
r_2& =8p_1p_2(p_1 - p_2)(p_1 + p_2), \\
s_1& =9p_1^8 + 52p_1^6p_2^2 + 22p_1^4p_2^4 + 52p_1^2p_2^6 + 9p_2^8, \\
s_2& =8p_1p_2(p_1 - p_2)(p_1 + p_2)(p_1^2 + 2p_1p_2 - p_2^2)(p_1^2 - 2p_1p_2 - p_2^2),
\end{aligned}
\label{n8valq12r12s12}
\end{equation}
where $p_1$ and $p_2$ are arbitrary parameters.

We thus get a solution of the simultaneous equations  \eqref{diophchn}  and \eqref{csum} with $n=8$  in terms of polynomials of degree 38. Since in this solution $(x_2, y_2)=(-x_1, y_1)$ and $(x_4, y_4) =(-x_3, y_3)$, we apply Lemma \ref{newsol} and obtain a solution in terms of distinct polynomials  of degree 114 in the parameters $p_1$ and $p_2$. While we omit giving this solution explicitly, when we take $(p_1, p_2)=(2, 1)$, we get, after appropriate scaling,  the following eight squares such that the sum of any seven of them is a square:
\begin{align*}
& 1793303948742806004358839863314163172496768^2, \\
& 35647669259187200217596168619979944579248^2, \\
& 1350335627724462794940009188342343291253664^2, \\
& 509318278582178443295965724916222044316304^2, \\
& 2125938053817649628168174016173670414131324^2,  \\
&  863449486628378007179143401686267952168368^2, \\
& 2943648522151467304268140794140107220456896^2, \\
& 70025522883762244048096185376403205296573^2.
\end{align*}

\section{Concluding remarks}
In this paper we have described two methods of finding $n$ squares such the sum of any $n-1$ of them is a square. We applied these methods to obtain $n$ squares, in parametric terms,  with the desired property when $n=5, 6, 7$ or 8. The first method is easy to implement and it can readily be applied to obtain parametric solutions for values of $n > 8$. However, since we will need to apply Lemma \ref{newsol} several times to obtain a aolution in distinct squares,  we will generally get  parametric solutions of our problem in terms of polynomials of very high degree. The second method is more difficult   but  when it can be implemented successfully, it often leads to solutions of lower degree as compared to the first method.

\noindent Ajai Choudhry, 13/4 A Clay Square, Lucknow - 226001, India.

\noindent E-mail address: ajaic203@yahoo.com

\end{document}